    \newtheorem{thm}{Theorem}[section]
    \newtheorem{lem}[thm]{Lemma}
\theoremstyle{definition}
    \newtheorem{rem}[thm]{Remark}
    \newtheorem{exmp}[thm]{Example}
\numberwithin{equation}{section}
\definecolor{amber}{rgb}{1.0, 0.75, 0.0}
\definecolor{ao}{rgb}{0.0, 0.50, 0.0}
\DeclareMathOperator{\cb}{
\arraycolsep=1pt\def\arraystretch{0.6}
\begin{array}{c}
\circ\\
\bullet
\end{array}
}
\DeclareMathOperator{\bc}{
\arraycolsep=1pt\def\arraystretch{0.6}
\begin{array}{c}
\bullet\\
\circ
\end{array}
}
\DeclareMathOperator{\bb}{
\arraycolsep=1pt\def\arraystretch{0.6}
\begin{array}{c}
\bullet\\
\bullet
\end{array}
}
\DeclareMathOperator{\cc}{
\arraycolsep=1pt\def\arraystretch{0.6}
\begin{array}{c}
\circ\\
\circ
\end{array}
}
\title{Correlations in the multispecies 
 PASEP on a ring}
\author{Nimisha Pahuja}
    \address{Department of Mathematics, Indian Institute of Science, Bangalore - 560012}
    \email{nimishap@iisc.ac.in}
\DeclareMathOperator{\iden}{\langle 1^n \rangle}
\begin{document}

\begin{abstract}
 
Ayyer and Linusson studied correlations in the multispecies TASEP on a ring (Trans AMS, 2017) using a combinatorial analysis of the multiline queues construction defined by Ferrari and Martin (AOP, 2008). It is natural to explore whether an analogous application of appropriate multiline queues could give similar results for the partially asymmetric case. In this paper, we solve this problem of correlations of adjacent particles on the first two sites in the multispecies PASEP on a finite ring. We use the multiline processes defined by Martin (EJP, 2020), the dynamics of which also depend on the asymmetry parameter $q$, to compute the correlations.
\end{abstract}

\keywords{PASEP; correlations; multiline queues}
%
%
\maketitle


\section{Introduction}\label{sec:introPASEP}

  The \textit{asymmetric simple exclusion process} or ASEP is a fundamental stochastic model that describes the probabilistic movement of particles on a one-dimensional lattice, where each site can be occupied by at most one particle~\cite{Intro_Liggett}. This model has been extensively studied in many different settings, and many of its properties are of interest to probabilists, combinatorialists and statistical physicists. One important property of the ASEP that has received significant attention is the correlation between adjacent particles in the stationary distribution of the process~\cite{aaslinusson,  Intro_AAV,ayyerlinusson, uchiyama2005correlation}.

   The standard ASEP on a ring is a Markov process where the particles move according to certain rules, such that they can hop to a neighbouring empty site. The rate at which these transitions occur is given by  $q \in [0,1)$ when the particle moves towards the left (or counterclockwise), and $1$ when the particle jumps rightwards (or clockwise). In a multispecies asymmetric exclusion process, the particles have a certain hierarchy, characterized by an integer labelling on each particle. This label is known as the \textit{type} or the \textit{species} of the particle. Each particle carries an exponential clock which rings with a rate $1$, and the particle tries to jump to its neighbouring site whenever the clock rings. If a particle labeled $i$ attempts to hop to the neighboring site on the right occupied by a particle labeled $j$, the particles can exchange places according to the following specified interaction rules.
     \[ij \rightarrow ji \text{ with rate }
     \begin{cases}
         1, &\text{if }i>j,\\
         q, &\text{if }i<j,
     \end{cases}\]
 where $q \in [0,1)$. Depending on the value of $q$, a multispecies ASEP can be of one of the two kinds; \textit{totally asymmetric simple exclusion process} or TASEP where $q=0$ and \textit{partially asymmetric simple exclusion process} or PASEP where $0 < q < 1$. The rate $q$ is called the \textit{asymmetry parameter} of the ASEP. In this article, we study the correlations in the multispecies PASEP on a ring.
 
 The steady-state distribution of the two-species asymmetric exclusion process on a \textit{ring} was obtained by Derrida, Janowsky, Lebowitz and Speer~\cite{Intro_DJLS} using a technique called \textit{matrix product ansatz}. Ferrari and Martin~\cite{ferrarimartin,ferrarimartin2} gave a probabilistic solution by constructing \textit{multiline queues} as a device to study TASEP with multiple species. The construction of Ferrari and Martin inspired later works where their algorithm was transformed into a matrix product representation of the multispecies TASEP~\cite{Intro_AAMP,evans2009matrix}. The exact solution for the stationary state measure for the PASEP on a ring with multiple species was found by Prolhac, Evans and Mallick~\cite{prolhac}. This was achieved by extending the matrix-product representation from \cite{evans2009matrix} to $q>0$.

 Recently, Martin~\cite{jamesmartin} studied the stationary distribution of the multispecies ASEP on a finite ring combinatorially and developed a method to sample exactly from the stationary distribution. This analysis was done by using queuing systems which are constructed recursively and can be seen as \textit{multiline diagrams} or \textit{multiline queues}. The connections of exclusion processes to Schubert polynomials, Macdonald polynomials, and orthogonal polynomials have also been explored by various authors \cite{Intro_Cantini, Intro_cgw, Intro_CMW_Macdonald}. In a recent work, Corteel, Madelshtam and Williams~\cite{Intro_CMW_Macdonald} gave an independent proof of Martin’s result, and used the multiline queues to give a new combinatorial formula for Macdonald polynomials. 
  
  Martin~\cite{jamesmartin} raised a question of whether the combinatorial analysis done by Ayyer and Linusson to compute closed-form expressions for the multi-point probabilities in multispecies TASEP~\cite{ayyerlinusson} can be generalised to the case $q>0$. In this paper, we compute the two-point correlations in the stationary distribution of the multispecies PASEP on a finite ring. To carry out this investigation, we use the \textit{multiline process} described in~\cite{jamesmartin} and the procedure of lumping~\cite{LPW_book} to transform the study of the stationary distribution of the multispecies PASEP into that of the stationary distribution of the multiline process. We have also used this technique earlier to study the correlations in multispecies continuous TASEP on a ring~\cite{Fullversion}. We use an algorithm that we call the $q$-\textit{bully path algorithm} on the multiline queues to project them to a word and assign weights to each such projection. The probability of each projection is defined in terms of the weights assigned to them.

 The structure of the paper is as follows. In Section~\ref{sec:bg}, we explain the multispecies PASEP in detail and then state the main result of this paper in Theorem~\ref{th:PASEPcor}. We also define \textit{linked multiline queues} and describe the $q$-bully path algorithm. The proof of Theorem~\ref{th:PASEPcor} is given in Section~\ref{sec:mainproof}.

   \section{Background and Results}\label{sec:bg}
  In a standard Asymmetric Simple Exclusion Process (ASEP) model, particles move along a lattice with a preferential direction, typically from left to right. The parameter $q \in [0,1)$, known as the \textit{asymmetry parameter}, controls the rate at which particles move in the non-preferred direction. In the limiting case where $q = 1$, the model reduces to the \textit{symmetric simple exclusion process} (SSEP), where particles have no directional preference. On the other hand, when $q = 0$, the system becomes a \textit{totally asymmetric simple exclusion process} (TASEP), where particles move exclusively in the preferred direction.  
  
We will now give a formal description of the multispecies PASEP model on a ring. We consider a ring with a finite number of sites; some of which are occupied by $n$ different types of particles. 
The unoccupied sites or \textit{holes} are then assigned the label $n+1$ and are treated as particles with the highest label.
Let $m=(m_1, \ldots, m_{n+1})$ be a tuple of nonnegative integers and let $N=\sum m_i$.  A multispecies PASEP of type $m$ is a Markov process that is defined on a ring with $N$ sites. For each $i \in [n]$, there are $m_i$ particles with label $i$ that occupy the sites of the ring. There are also $m_{n+1}$ holes. Each site can accommodate at most one particle. Let the state-space of the system be denoted by  $\Lambda_m$ 
and the states are given by the cyclic words $\omega=(\omega_k : k\in [N])$, where $\omega_k \in [n+1]$ is the label of the particle at site $k$. The dynamics of the process are as follows. Each particle carries an exponential clock which rings with rate $1$, and the particle exchanges position with the particle on the right whenever the clock rings. Let the particle on the left and the right be labeled $i$ and $j$ respectively. The transition happens with the following rates.

\[ij \rightarrow ji \text{ with rate }
     \begin{cases}
         1, &\text{if }i>j,\\
         q, &\text{if }i<j.
     \end{cases}\]

In this paper, we are interested in studying the correlations of the two particles at the first two sites of the ring for a PASEP of type $\iden=(1,\ldots,1)$ on a ring with $n$ sites. Let $c_{i,j}^q(n)$ denote the probability in the stationary distribution, that the particles labeled $i$ and $j$ occupy the first and second positions, respectively, on the ring $\mathbb{Z}_n$. Note that for $q=0$, the process becomes a TASEP on a finite ring for which the analysis has already been done by Ayyer and Linusson~\cite{ayyerlinusson}. Let $c_{i,j}^0(n)$ denote the probability that the first two sites of the ring are occupied by particles labeled $i$ and $j$, respectively, when $q=0$. This is formulated in the following theorem.

\begin{thm}\cite[Theorem 4.2]{ayyerlinusson}\label{th:q0}  For $i,j \in [n]$, we have
\[c_{i,j}^0(n) = 
\begin{cases} \frac{i-j}{n\binom{n}{2}}; & \text{ if } i>j, \\
\frac{1}{n^2} + \frac{i(n-i)}{n^2(n-1)}; & \text{ if } i=j-1,\\
\frac{1}{n^2}; & \text{ if }i<j-1.
\end{cases}\]
\end{thm}
We generalise Theorem~\ref{th:q0} for arbitrary $q \in [0,1)$ and prove the following main theorem regarding the two-point correlations in this paper.

\begin{thm}\label{th:PASEPcor}
Let $c_{i,j}^q(n)$ be the probability that particles labeled $i$ and $j$ are in the first and the second positions respectively in a PASEP of type $\iden=(1,\ldots,1)$. For $1\leq j < i \leq n$, we have\\
\begin{multline}\label{ci>j}
    c_{i,j}^q(n) = c_{i,j}^0(n) +\frac{(i-j+1)(2j(n-i)+i+j-n-1)([i-j+1]_q-1)}{n^2(n-1)[i-j+1]_q}\\
    -\frac{(i-j+2)(j-1)(n-i)([i-j+2]_q-1)}{n^2(n-1)[i-j+2]_q}  -\frac{j(i-j)(n-i+1)([i-j]_q-1)}{n^2(n-1)[i-j]_q},
 \end{multline}
and for $1\leq i < j \leq n$, we have
\begin{multline}\label{ci<j}
    c_{i,j}^q(n) =  c_{i,j}^0(n)-\frac{(j-i+1)(2i(n-j)+i+j-n-1)q^{(j-i)}}{n^2(n-1)[j-i+1]_q}\\
    +\frac{(i-1)(j-i+2)(n-j)q^{(j-i+1)}}{n^2(n-1)[j-i+2]_q} +
    \begin{cases}
        \frac{i(j-i)(n-j+1)q^{(j-i-1)}}{n^2(n-1)[j-i]_q} & i<j-1\\
        0& i=j-1
        \end{cases},
\end{multline} 
where $[k]_q = 1+q+\cdots+q^{k-1}$ is the q-analog of an integer $k>0$. 
\end{thm}
\begin{rem}\label{rem:qis0}
Note that setting $q=0$ in \eqref{ci>j} and \eqref{ci<j} gives $c_{i,j}^q(n)=c_{i,j}^0(n)$. 
\end{rem}
We prove this result using Martin's multiline process~\cite{jamesmartin} and lumping. Let's start by defining some notation. Consider the tuple $m=(m_1,\ldots,m_{n+1})$ such that $N = m_1+\cdots+m_{n+1}$. To construct a \textit{multiline queue} (MLQ) of type $m$, take a stack of $n$ rings numbered from top to bottom, each with $N$ sites. For every $k \in [n]$, the $k^{th}$ ring has $S_k=m_1+\cdots +m_k$ sites occupied by particles, with no additional constraint. An occupied site is represented by $\bullet$, while an unoccupied site (or hole) is represented by $\circ$. For ease of presentation, we represent the rings as lines with their ends connected. See Figure~\ref{fig:MLQ1} for an example of a multiline queue of type $(2,1,2,2,6)$. Let $\Omega_m$ denote the set of all multiline queues of type $m$. Since the selection of occupied sites on different lines is independent, it is easy to see that the number of multiline queues in $\Omega_m$ is $\binom{N}{S_1}\binom{N}{S_2}\ldots\binom{N}{S_n}$.
\begin{figure}[h!]
   \centering
    \includegraphics[scale=0.4]{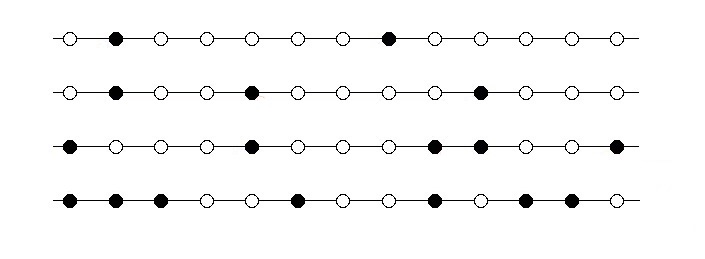}
    \caption{A multiline queue of type $(2,1,2,2,6)$}
    \label{fig:MLQ1}
\end{figure}

 Using the \textit{q-bully path} algorithm from \cite{jamesmartin}, which generalises the bully path algorithm given in \cite{ayyerlinusson, ferrarimartin}, we can project each multiline queue onto many possible words of type $m$. 
The algorithm is a recursive process. 
At each step, we select the topmost row that contains an available particle, link that particle to an available one in the next row, and continue this process until a particle in the last row is linked. The linked particles are then considered unavailable. This process is repeated until every $\bullet$ in the first $n-1$ rows is linked with a $\bullet$ in a row below it. We denote each ``link" between two particles on adjacent rows by an $\rightarrow$, and each link is assigned a weight based on the number of available particles at that step. Corteel, Mandelshtam and Williams~\cite{Intro_CMW_Macdonald} use the term  \textit{pairing} to describe these links. A \textit{linked multiline queue} ($L$MLQ) is defined as a multiline queue along with a maximal set of such links. See Figure~\ref{fig:lMLQ} for an example of a linked multiline queue. 
\begin{figure}[h!]
   \centering
    \includegraphics[scale=0.4]{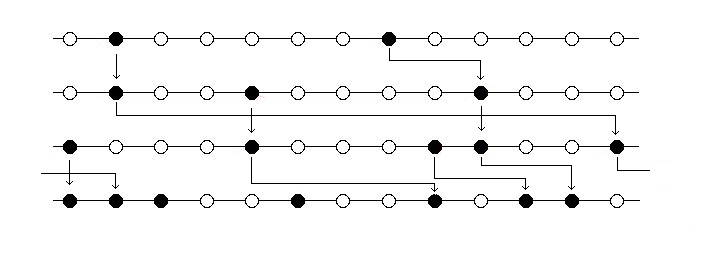}
    \caption{A linked multiline queue of type $(2,1,2,2,6)$}
    \label{fig:lMLQ}
\end{figure}

Each linked multiline queue is associated with both a word and a weight. The weight of an $L$MLQ is defined as the product of the weights of all the links within it. We will first describe the
$q$-bully path algorithm for the case when $n=2$, and then extend the description to the general case of $n$. Let $m=(m_1, m_2, m_3)$, where each site of the multispecies PASEP of type $m$ is either a hole or occupied by a particle of type $1$ or $2$. Consider a multiline queue $M$ of type $m$.

\begin{enumerate}[(1)]
    \item\label{item:bpp1} \textbf{Step 1:} Choose an occupied site $a$ in the first row of $M$.  If there is also a particle at site $a$ in the second row, construct a straight link in the $a^{\text{th}}$ column of $M$ and assign it a weight $1$. This link is referred to as a ``trivial" link.     
    
    \item\label{item:bpp2} \textbf{Step 2:} If there isn’t a particle at site $a$ in the second row, let there be $t$ available particles in the second row at sites $b_1, \ldots, b_t$. Reorder these $t$ particles in increasing order of the values $(b_j-a) \mod N$. The particle at site $a$ can be linked to any of the $t$ particles, resulting in multiple possible $L$MLQs. If it is linked to the particle at site $b_i$, then the link $a \rightarrow b_i$ has a weight given by 
    \[
    \frac{q^{i-1}}{[t]_q}.
    \]
    The particle at site $b_i$ is now unavailable for further linking. Refer to Figure~\ref{fig:links} for examples of links and their corresponding weights. Repeat this process by choosing particles in the first row in any arbitrary order and linking them to particles in the second row. In Example~\ref{exmp:bullyPASEP}, we proceed from left to right.    
    
    \begin{figure}[h]
    \centering
    \includegraphics[scale=0.6]{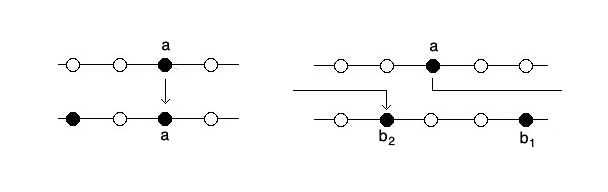}
    \caption{Examples of a trivial and a non-trivial link. The weight of the trivial link on the left is $1$, and the weight of the non-trivial link on the right is $\frac{q^{(2-1)}}{[2]_q}$. }
    \label{fig:links}
\end{figure}
     
    \item \textbf{Step 3:} Label all the linked particles in the second row as type $1$, the unlinked particles as type $2$, and the unoccupied sites as type $3$. This algorithm creates a linked multiline queue of $M$. The associated word $\omega=(\omega_i: i \in [n])$, where $\omega_i$ represents the label of site $i$ in the second row of the multiline queue, is referred to as the \textit{projected word} of the $L$MLQ.
\end{enumerate}
The weight of a linked multiline queue is defined as the product of the weights of all its links. The probability of a word $\omega$ of type $m$ in $\Lambda_m$ is proportional to the sum of weights of all the $L$MLQs that project to $\omega$. Next, we will illustrate the $q$-bully path algorithm with examples of various linked multiline queues derived from the same multiline queue.
\begin{exmp}\label{exmp:bullyPASEP}
Let $m=(3,2,3)$. Let $M$ be a multiline queue of type $m$, i.e.,
{\large 
\[M=\begin{array}{cccccccccc}
        \circ & \circ  & \bullet & \circ & \bullet & \bullet & \circ & \circ \\
        \bullet & \bullet & \circ & \circ & \bullet & \circ & \bullet & \bullet
    \end{array}.\]}
     
\begin{figure}[h!]
    \centering
    \includegraphics[scale=0.5]{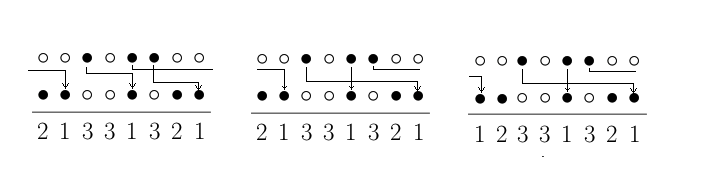}
    \caption{Linked multiline queues of an MLQ of type (3,2,3). }
    \label{fig:bullyconf}
\end{figure}

The multiline queue $M$ can generate many linked multiline queues (LMLQs), three of which are illustrated in Figure~\ref{fig:bullyconf}. The weights of the linked MLQs from left to right are $\frac{1\cdot q^3\cdot q}{[5][4][3]}$, $\frac{q^2 \cdot q^2}{[5][3]}$, and $\frac{q^2 \cdot q}{[5][3]}$ respectively. Note that two or more distinct $L$MLQs can project to the same word but have different weights, as illustrated by the first two examples in Figure~\ref{fig:bullyconf}.\end{exmp}

The $q$-bully path algorithm for $n=2$ can be interpreted in terms of a queueing process. The indices with $\bullet$'s in the first row can be viewed as ''arrival times" in a system of queues, and the indices with $\bullet$'s in the second row represent ''service times." For each arrival time, the algorithm assigns a unique ''departure time" from the available service times. The sites in the second row are thus categorised as departure times, times of unused service, or times of no service. These are labeled as $1$, $2$, and $3$ respectively in the projected word.

Now, we define the algorithm for $n > 2$ recursively.

\begin{enumerate}
    \item \textbf{Step 1:} Let $m = (m_1, \dots, m_{n+1})$ be such that $N = \sum m_i$. Consider $M$, a multiline queue of type $m$. Link all the particles in the first row to $m_1$ particles in the second row by following the $q$-bully path algorithm for $n = 2$ as described earlier. Label the sites in the second row as $1$, $2$, or $3$ accordingly.
    
    \item \textbf{Step 2:} For $k > 1$, assume that all the particles in the $j^{th}$ row have been linked to an available particle in the $(j+1)^{st}$ row for all $j < k$. Each particle in the $k^{th}$ row lies at the end of a chain of links starting in the $i^{th}$ row for some $i \leq k$. Such a particle is said to have type $i$. Repeat steps (1) and (2) of the algorithm for the case $n = 2$ for all the particles in the $k^{th}$ row, in the increasing order of their types; by creating links to the $(k+1)^{st}$ row and assigning a weight to each link according to the number of available particles. Thus, all the particles in the first $k^{th}$ row have now been linked to an available particle in the row below.
    
    \item \textbf{Step 3:} Repeat step $2$ for all $k < n$. This results in a linked multiline queue. Assign each particle in the $n^{th}$ row a label corresponding to its type. The holes are assigned the label $n+1$. The weight of the $L$MLQ is the product of the weights of all the links in it. The labels of all the sites in the $n^{th}$ row generate a word $\omega = (\omega_i: i \in [n+1])$.
    \end{enumerate}

Refer to Figure~\ref{fig:lMLQbully} for an example of a $q$-bully path projection applied to a multiline queue consisting of four rows. In the figure, the product of the link weights for each row is displayed to the right of the corresponding row. The weight of the linked multiline queue can be computed by multiplying together the weights from each row.  As in the case when $n=2$, the probability of the word $\omega$ of type $m$ is proportional to the sum of weights of all the $L$MLQs that project to $\omega$.
\begin{figure}[h]
     \centering
    \includegraphics[scale=0.5]{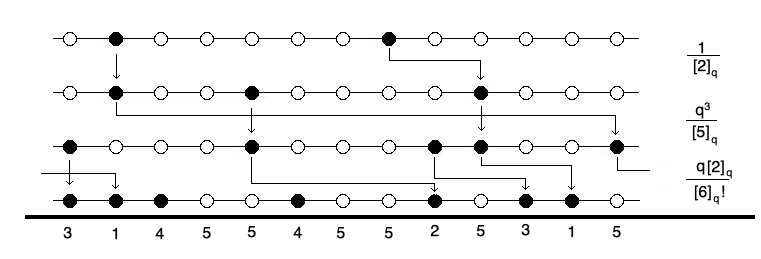}
    \caption{An $L$MLQ of type $(2,1,2,2,6)$ with weight {\large $\frac{q^4}{[6]_q![5]_q}$}}
    \label{fig:lMLQbully}
    \end{figure}
    
Finally, we use a property known as the \textit{projection principle} to prove the main result of this paper. This principle states that particles of other types cannot distinguish particles of two consecutive types.  Recall that  $\iden=(1,\ldots,1)$ and let $\Omega_n$ be the set of all multiline queues of type $\iden$. To compute the correlation $c_{ij}^q(n)$ from Theorem~\ref{th:PASEPcor} for $i,j \in [n]$, we consider all the $L$MLQ of type $\iden$ that project to a word with $i$ in the first position and $j$ in the second.  Using the projection principle, we can identify two consecutive labels $k$ and $k+1$, and define a natural projection from a PASEP with $n$ species to a PASEP with $n-1$ species. To compute the correlation $c_{ij}^q(n)$ for $i>j$, it suffices to find the probability that a $3$ is followed by a $2$ in the projection of the word of a three-species multiline queue of type $m_{s,t}=(s,t,n-s-t)$. This is obtained by the repeated application of the projection principle, allowing us to lump the multispecies PASEP of type $\iden$ to the multispecies PASEP of type $m_{s,t}=(s,t,n-s-t)$. On the other hand for $i<j$, $i$ becomes a $2$, and $j$ becomes a $3$ in the PASEP of type $m_{s,t}$. Let $\Omega_{s,t}(n)$ be the set of multiline queues of type $m_{s,t}$ and let
 \begin{align*}
    T^<_{s,t}(n) =& \mathbb{P}\{\omega_1=2,\, \omega_2=3\},\,\, \text{ and }
    T^>_{s,t}(n) = \mathbb{P}\{\omega_1=3,\, \omega_2=2\}, 
\end{align*}
where $\omega$ 
is a random word in the state space of the multispecies PASEP of type  $m_{s,t}$. Let $i<j$. Then, by the projection principle we have \[T^<_{s,t}(n)=\sum_{j=s+t+1}^n \sum_{i=s+1}^{s+t} c_{ij}^q(n),\] and using the principle of inclusion-exclusion we get \begin{equation}\label{PIE6}
c_{i,j}^q(n)= T^<_{i-1,j-i}(n) - T^<_{i,j-i-1}(n)-T^<_{i-1,j-i+1}(n) +T^<_{i,j-i}(n).
\end{equation}
Similarly, for $i>j$ we have  \[T^>_{s,t}(n)=\sum_{i=s+t+1}^n \sum_{j=s+1}^{s+t} c_{ij}^q(n),\] 
\begin{equation}\label{PIE5}
\text{Thus, } c_{i,j}^q(n)= T^>_{j-1,i-j}(n) - T^>_{j,i-j-1}(n)-T^>_{j-1,i-j+1}(n) +T^>_{j,i-j}(n).
\end{equation}
\section{Proof of Theorem~\ref{th:PASEPcor}}\label{sec:mainproof}
We first prove Theorem~\ref{th:PASEPcor} for the case $i>j$. Let $M$ be a multiline queue of type $m_{s,t}$ and $(r,p)$ be the coordinate of $p^{th}$ site in the $r^{th}$ row such that $M_{r,p} \in \{\circ, \bullet \}$ denotes the occupancy status of the site at $(r,p)$. We use \eqref{PIE5} to solve for $c^q_{i,j}$. To compute $T^>_{s,t}$, we only need to consider the multiline queues in $\Omega_{s,t}(n)$ which have either of the following structures:
 \begin{center}
   \[
   \begin{array}{cccccccccc}
          \circ & \circ & . & . & \ldots & . \\
          \circ & \bullet & . & . & \ldots & . \\
        \hline
          3 & 2 & . & . & \ldots & .
    \end{array} \hspace{0.5in} \text{ or } \hspace{0.5in}  \begin{array}{cccccccccc}
        \bullet & \circ & . & . & \ldots & . \\
        \circ & \bullet & . & . & \ldots & . \\
        \hline
        3 & 2 & . & . & \ldots & .
    \end{array}\] 
 \end{center}
 
This holds because an unoccupied site in a $2$-species system is labeled $3$, hence $M_{2,1}=\circ$ to ensure $\omega_1=3$. Also, $M_{2,2}=\bullet$ so that $\omega_2 \neq 3$. Further, if $M_{1,2}=\bullet$, we have a trivial link at the second site in $M$ giving $\omega_2=1$. Therefore, $M_{1,2}= \circ$.

Before computing the weights contributed by the $L$MLQs in the above two cases, we first consider the set $\Theta_{s,t}(k)$ of multiline queues $M$ of type $(s,t,k-s-t)$ such that there is no $\bullet$ at the same site in both the rows in $M$. In other words, $\Theta_{s,t}(k)$ consists of $L$MLQs with no trivial links. Let $\eta_{s,t}(k)$ be the total weight of all the $L$MLQs in $\Theta_{s,t}(k)$ that project to a word beginning with $2$. By the same argument as in the previous paragraph, this requires that we only consider the multiline queues in $\Theta_{s,t}(k)$ that begin with
($\cb$). Let $C_{s,t}(k)$ be the number of such multiline queues, i.e., $C_{s,t}(k)=|\{M' \in \Theta_{s,t}(k) : M' \text{ begins with } (\cb)\}| $. We have 
\[C_{s,t}(k) = \binom{k-1}{s}\binom{k-s-1}{s+t-1},\]
because the first column is fixed, and we only have to select $s$ ($\bc$) and $s+t-1$ ($\cb$) columns from $k-1$ columns. Next, we compute $\eta_{s,t}(k)$.

\begin{thm}\label{th:etaPASEP}
For $s,t \geq 1$ and $s+t\leq k$, we have
\begin{equation}\label{etaPASEP}
\eta_{s,t}(k)=\frac{t}{s+t}\binom{k-1}{s,\,s+t-1,\,k-2s-t}.
\end{equation}
\end{thm}
\begin{rem}\label{rem:qindependence}
It is interesting to note that despite being a sum of link weights which are $q$-fractions, $\eta_{s,t}(k)$ adds up to a rational number. There is no dependence on $q$. 
\end{rem}
Before looking at the proof of Theorem~\ref{th:etaPASEP}, let us first consider an example for $\Theta_{1,2}(4)$. 
\begin{exmp}\label{exmp:etaexample}
We have $C_{1,2}(4)=3$. We consider below the three multiline queues from the set $\Theta_{1,2}(4)$ that begin with ($\cb$) and list out all the possible projected words along with their weights. 
According to Theorem~\ref{th:etaPASEP}, \[\eta_{1,2}(4)=\frac{2}{3} C_{1,2}(4) =\,2.\]

\[\begin{array}{cccccc}
      \circ & \bullet & \circ & \circ & &\\
      \bullet & \circ & \bullet & \bullet & & wt\\
      \hline
     \color{red}{2} & \color{red}{3} & \color{red}{1} & \color{red}{2} & &\color{red}{\frac{1}{[3]_q}}\\
      \color{red}{2} & \color{red}{3} & \color{red}{2} & \color{red}{1} & &\color{red}{\frac{q}{[3]_q}}\\
      1 & 3 & 2 & 2 & &\frac{q^2}{[3]_q}
\end{array}\hspace{0.6in}
 \begin{array}{cccccc}
      \circ  & \circ & \bullet & \circ & &\\
      \bullet  & \bullet & \circ & \bullet & &wt \\
      \hline
      \color{red}{2} & \color{red}{2} & \color{red}{3} & \color{red}{1} & &\color{red}{\frac{1}{[3]_q}}\\
      1 & 2 & 3 & 2 & &\frac{q}{[3]_q}\\
      \color{red}{2} & \color{red}{1} & \color{red}{3} & \color{red}{2} & &\color{red}{\frac{q^2}{[3]_q}}\\
\end{array}\hspace{0.6in}
 \begin{array}{cccccc}
      \circ  & \circ & \circ & \bullet & \\
      \bullet  & \bullet & \bullet & \circ &wt\\
      \hline
      1 & 2 & 2 & 3 & \frac{1}{[3]_q}\\
      \color{red}{2} & \color{red}{1} & \color{red}{2} & \color{red}{3}  &\color{red}{\frac{q}{[3]_q}}\\
      \color{red}{2} & \color{red}{2} & \color{red}{1} & \color{red}{3} & \color{red}{\frac{q^2}{[3]_q}}\\
\end{array}\]

Note that the words in red are the ones that begin with a $2$ and the sum of the weights of $L$MLQs in $\Theta_{1,2}(4)$ that project to such a word is $\eta_{1,2}(4)=\frac{2(1+q+q^2)}{[3]_q}=2$.

In addition, we make the following observations about this example. Adding the weights of all possible projected words is $1$ for all three multiline queues. Next, rotating a linked multiline queue rotates the projected word by the same distance while preserving the weight. In each row above, the configurations in each column are rotations of one another and have the same weight. Based on these observations, we prove these properties for a more general class of multiline queues.
\end{exmp}

\begin{lem}\label{lem:rotation}
Let $M$ be a multiline queue in $\Theta_{s,t}(k)$. The following holds for $M$.
\begin{enumerate}[(1)]
    \item\label{item:lemmasum} The sum of weights of all the linked multiline queues of $M$ is $1$.
    \item\label{item:lemmarotn} Rotating $M$ while keeping the same links rotates the projected word by the same distance, while the weights of the corresponding linked multiline queues remain unchanged. 
\end{enumerate}
\end{lem}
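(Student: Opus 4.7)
The plan is to treat the two statements independently. Part (1) will follow from a short telescoping argument using the fact that the $q$-bully path step weights form a probability distribution at each step. Part (2) will follow from the rotation-invariance of the cyclic data (distances and counts) that the algorithm uses to assign its step weights.

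For part (1), I would fix $M \in \Theta_{s,t}(k)$. Since no column of $M$ has $\bullet$ in both rows, no trivial links are possible and all $s$ first-row particles must be linked non-trivially. Processing these in the prescribed order, at step $\ell$ the current first-row particle sees $t_\ell = s + t - (\ell - 1)$ available second-row particles, and the choice of rank $r \in \{1, \ldots, t_\ell\}$ carries weight $q^{r-1}/[t_\ell]_q$. The identity
\[
\sum_{r=1}^{t_\ell} \frac{q^{r-1}}{[t_\ell]_q} = \frac{[t_\ell]_q}{[t_\ell]_q} = 1
\]
shows that these step weights form a probability distribution, independently of the previous choices. Writing the total weight of all $L$MLQs of $M$ as an iterated sum over the $s$ choices and peeling off the innermost sum repeatedly, each pass contributes a factor of $1$, and I conclude that the total is $\prod_{\ell=1}^{s} 1 = 1$.

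For part (2), let $\sigma_d$ denote rotation of the ring by $d$ positions, and consider the rotated $L$MLQ $(\sigma_d M, \sigma_d L)$, where $\sigma_d L = \{\sigma_d a \to \sigma_d b : a \to b \in L\}$. The cyclic distance $(b - a) \bmod N$ is invariant under simultaneous rotation. Consequently, when the algorithm is applied to $(\sigma_d M, \sigma_d L)$ with a correspondingly rotated processing order (so the $\ell$-th processed first-row particle of $\sigma_d M$ is the image under $\sigma_d$ of the original $\ell$-th), the set of available second-row particles, their cyclic ranks with respect to the current first-row particle, and the counts $t_\ell$ all agree step-by-step with their unrotated analogues. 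Each link weight $q^{r-1}/[t_\ell]_q$ is therefore preserved, and so is their product. For the projected word, the label at a second-row site is determined purely locally ($1$ if linked, $2$ if occupied but unlinked, $3$ if a hole); since $\sigma_d$ transports this local data from site $p$ to site $p+d$, the projected word is rotated by $d$.

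The main obstacle I anticipate is justifying the role of the processing convention in part (2). The weight of a fixed $L$MLQ does depend in general on the order in which the first-row particles are processed, so a naive ``left-to-right from a fixed base site'' convention applied both before and after rotation can produce different weights (a direct check shows the weight can pick up a factor of $q$ under rotation by one step if the convention is kept fixed). The correct reading, and the one I would adopt to make the argument precise, is that the processing convention rotates together with the multiline queue, so that the algorithmic history is intrinsic to the cyclic configuration; under this reading the step-by-step agreement of ranks and counts sketched above yields the weight invariance immediately.
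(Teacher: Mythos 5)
Your proof is correct and follows essentially the same route as the paper: part (1) is the paper's induction on $s$ recast as a telescoping iterated sum (both rest on the fact that at each linking step the weights $q^{r-1}/[t_\ell]_q$ sum to $1$), and part (2) is the same rotation argument. Your closing caveat --- that individual link weights are rotation-invariant only if the processing convention is carried along with the rotation rather than anchored at a fixed base site --- is a real subtlety that the paper's one-line proof of (2) leaves implicit, and the reading you adopt is the correct way to make that statement precise.
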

 \begin{proof}
  We prove~(\ref{item:lemmasum}) by induction on $s$. Let $s=1$. There are $t+1$ possible links from the only $\bullet$ in the first row to a particle in the second row. This accounts for $t+1$ $L$MLQs of $M$; each corresponding to one of these possibilities and they have respective weights $q^{i-1}/[t+1]_q$ for $i \in \{1,\ldots, t+1\}$. Adding these weights for all $i$, we get $1$.
  
  Let us assume~(\ref{item:lemmasum}) is true for $s-1$. Let $M$ have $s$ $\bullet$'s in the first row and $(s+t)$ $\bullet$'s in the second row at sites different from those with $\bullet$'s in the first row. Let the occupied sites in the second row be labeled as $b_1,\ldots,b_{s+t}$. We can link the particles of the first row to the particles in the second row in any order, in particular from left to right. Let the leftmost $\bullet$ (say at site $a$) in the first row be linked to the particle at site $b_j$, for some $j$ and let the weight of this link be $q^{i-1}/[s+t]_q$, where $i=j-c \mod (s+t)$ for some constant $c$. Constructing links for the remaining $\bullet$'s in the first row is the same as constructing links in a multiline queue $M_d$ which is obtained from $M$ by deleting the columns $a$ and $b_j$. Note that $M_d \in \Theta_{s-1,t}(k-2)$ and the sum of weights of all the $L$MLQs of $M_d$ is $1$. That is, the sum of all the $L$MLQs of $M$ where $a \rightarrow b_j$ is $1\cdot(q^{i-1}/[s+t]_q)$. Summing over all $j \in [s+t]$ (equivalently over all $i \in [s+t]$), we get$~(\ref{item:lemmasum}$).
  
  To prove~(\ref{item:lemmarotn}), recall that the projected word describes the label of each site in the second row. Hence, rotating both rows of the MLQ simultaneously while keeping the links preserved only rotates the projected word without changing the weights of any links. 
 \end{proof}
 
\begin{proof} [Proof of Theorem~\ref{th:etaPASEP}]
 Let $\mathcal{S}$ be the set of all the linked multiline queues in $\Theta_{s,t}(k)$ which begin with ($\cb$) and project to a word beginning with $2$. Recall that $\eta_{s,t}(k)$ is the sum of the weights of all $L$MLQs in $\mathcal{S}$.  Let $P \in \mathcal{S}$ has the following structure 
 \[P= \begin{array}{cccccccccc}
        \circ  & . & . & \ldots & . \\
        \bullet & . & . & \ldots & . \\
        \hline
         2 & . & . & \ldots & .
\end{array}. \]
    Further, recall that $C_{s,t}(k)$ is the cardinality of set \[\mathcal{M}=\{M \in \Theta_{s,t}(k): M \text{ begins with } (\cb) \}.\] Consider any $M \in \mathcal{M}$. The weights from all the $L$MLQs of $M$ sum up to $1$ by Lemma~\ref{lem:rotation} (\ref{item:lemmarotn}). Note that not all of these $L$MLQs belong to $\mathcal{S}$, as some project to a word with a $1$ in the first position. However, each $L$MLQ of $M$ has exactly $t$ rotations that are in $\mathcal{S}$ because there are $t$ $\bullet$'s in the second row that are not linked. Let $\Gamma$ be the collection of all the rotations of all the linked multiline queues in $\mathcal{M}$ that belong to $\mathcal{S}$. Note that $\Gamma$ is a multiset. 
By Lemma~\ref{lem:rotation} (\ref{item:lemmasum} and \ref{item:lemmarotn}),
 the sum of the weights of all the linked multiline queues in $\Gamma$ is $tC_{s,t}$,
 
 Further, note that for each $L$MLQ in $\mathcal{S}$, there are $(s+t)$ rotations which start with $(\cb)$. In other words, each configuration of $\mathcal{S}$ is obtained as a rotation of $(s+t)$ different $L$MLQs in $\mathcal{M}$. Therefore, each linked multiline queue of $\mathcal{S}$ has $(s+t)$ copies in $\Gamma$. Hence, the sum of the weights of all the linked multiline queues in $\Gamma$ is $(s+t)\eta_{s,t}(k)$. This gives us the equation \begin{equation}
    (s+t)\eta_{s,t}(k) = tC_{s,t}(k),
\end{equation} thereby completing the proof.

\end{proof}

 To compute $T^>_{s,t}(n)$, recall that we only need to consider the $L$MLQs with either of the following structures: 
\[\text{(A) \, } \begin{array}{cccccccccc}
        \circ & \circ & . & . & \ldots & . \\
        \circ & \bullet & . & . & \ldots & . \\
        \hline
        3 & 2 & . & . & \ldots & .
    \end{array}  \hspace{0.3in} \text{ or } \hspace{0.3in}
    \text{(B) \, }\begin{array}{cccccccccc}
        \bullet & \circ & . & . & \ldots & . \\
        \circ & \bullet & . & . & \ldots & . \\
        \hline
        3 & 2 & . & . & \ldots & .
    \end{array}\]
    For \( X \in \{A, B\} \), let \( W^X_{s,t}(n) \) denote the total weight of the linked multiline queues of type \( (X) \) in \( \Omega_{s,t}(n) \) where the corresponding word begins with \( (3,2) \). Next, let \( U^X_{s,t}(n) \subset \Omega_{s,t}(n) \) represent the set of LMLQs of type \( (X) \) that do not have a \( (\bb) \) column. Finally, let \( \tau^X_{s,t}(n) \) be the weight contributed to \( W^X_{s,t}(n) \) by the LMLQs in \( U^X_{s,t}(n) \).
    
    \begin{lem}\label{lem:alphatype1} Let $s,t\geq 0$, $n>s+t$, and $X\in \{A,B\}$. $W^X_{s,t}$ and $\tau^X_{s,t}$ are related by the following equations.
    \begin{eqnarray}\label{alpharec}
    W^A_{s,t}(n)=\sum_{i=0}^s\binom{n-2}{i} \tau^A_{s-i,t}(n-i),\\
    W^B_{s,t}(n)=\sum_{i=0}^{s-1}\binom{n-2}{i} \tau^B_{s-i,t}(n-i)\label{betarec}.
    \end{eqnarray}
    \end{lem}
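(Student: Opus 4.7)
The plan is to set up a weight-preserving bijection between $L$MLQs contributing to $W^X_{s,t}(n)$ and pairs consisting of (i) a choice of positions for the $\bullet/\bullet$ (trivial-link) columns among positions $3$ through $n$, together with (ii) a residual $L$MLQ lying in $U^X_{s-i,t}(n-i)$.

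Fix $X = A$ and let $L$ be an $L$MLQ in $\Omega_{s,t}(n)$ of type (A) whose projection begins with $(3,2)$. Every trivial link of $L$ sits in a column with $\bullet$ in both rows; since the first two columns of a type-(A) $L$MLQ are $\cc$ and $\cb$, all such columns lie in $\{3, \ldots, n\}$. Let $S$ be their set, $i := |S|$ (so $0 \leq i \leq s$), and $L'$ the $L$MLQ obtained by deleting those $i$ columns. Then $L'$ has $n-i$ sites, $s-i$ row-$1$ particles, $s+t-i$ row-$2$ particles, no trivial links, retains the type-(A) first two columns, and still projects to a word beginning with $(3,2)$; hence $L' \in U^A_{s-i,t}(n-i)$. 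The map is invertible: given $(S, L')$ with $|S| = i$ and $L' \in U^A_{s-i,t}(n-i)$ projecting to $(3,2,\ldots)$, we reinsert $\bb$ columns at the positions in $S \subseteq \{3, \ldots, n\}$ to recover $L$ uniquely.

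The central step is verifying weight preservation. Trivial links contribute weight $1$, so it suffices to show that every non-trivial link has the same weight in $L$ and in $L'$. For a non-trivial link $a \to b$, its weight $q^{k-1}/[t]_q$ depends only on $t$, the number of row-$2$ particles available when $a$ is processed, and $k$, the cyclic rank of $b$ among them starting from $a$. Because the row-$2$ particles at trivial columns are instantly paired with their row-$1$ counterparts and hence never enter the ``available'' pool for any other row-$1$ particle (regardless of the order in which row-$1$ particles are processed, which the algorithm permits to be arbitrary), both $t$ and $k$ remain unchanged upon deletion of the trivial columns. Thus each non-trivial link retains its weight and the total weight of $L$ equals that of $L'$.

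Summing over $i$, and observing that there are $\binom{n-2}{i}$ choices of $S \subseteq \{3,\ldots,n\}$ of size $i$, yields \eqref{alpharec}. The argument for $X = B$ is identical except that the first column is $\bc$, which already accounts for one of the $s$ row-$1$ particles, so at most $s - 1$ row-$1$ particles lie outside the first two columns; this truncates the sum at $i = s - 1$ and gives \eqref{betarec}. The main obstacle is the weight-preservation step above; once it is established, the rest is straightforward enumeration.
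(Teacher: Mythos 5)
Your proposal is correct and follows essentially the same route as the paper: classify the contributing $L$MLQs by the number $i$ of $\bb$ (trivial-link) columns, note the $\binom{n-2}{i}$ choices for their positions outside the fixed first two columns, delete them to land in $U^X_{s-i,t}(n-i)$, and use that trivial links have weight $1$ while non-trivial link weights are unaffected. Your explicit verification that deletion preserves both the count of available particles and the cyclic ranks is a slightly more detailed rendering of the paper's appeal to order-independence of link construction, but the argument is the same.
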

    \begin{proof}
     Links in any $L$MLQ can be constructed in any arbitrary order. Therefore, we first construct all the trivial links and then process the remaining particles from left to right. Since the weight of a trivial link is $1$, 
     the weight of an $L$MLQ is equal to the product of the weights of non-trivial links.
     
     Consider an arbitrary linked multiline queue contributing to $W^X_{s,t}(n)$ that has $i$ columns of the form ($\bb$). Then, $0 \leq i \leq s$ for $X=A$ and $0 \leq i \leq s-1$ for $X=B$. There are $\binom{n-2}{i}$ ways to choose these $i$ columns. Deleting these columns results in an $L$MLQ of type $(s-i,t,n-i)$ that belongs to $U^X_{s-i,t}(n-i)$. 
     Summing over all possible values of $i$, we get Lemma~\ref{lem:alphatype1}.
    \end{proof}
    Note that the linked multiline queues contributing to $\tau^A_{s,t}(n)$ are in bijection with those of $\eta_{s,t}(n-1)$ in $\Theta_{s,t}(n-1)$ because the first column $(\cc)$ of a multiline queue of type ($A$) does not contribute to the weight of the configuration. Therefore, $\tau^A_{s,t}(k)=\eta_{s,t}(k-1)$ for all $k>1$. Substituting this equation into~\eqref{alpharec}, we get:
    \begin{eqnarray*}
    W^A_{s,t}(n) & = & \sum_{i=0}^s \binom{n-2}{i} \eta_{s-i,t}(n-i-1)\\
    & = &\frac{t(n-2)!(n-1)!}{s!(n-s-1)!(s+t)!(n-s-t-1)!}.
    \end{eqnarray*}
    Therefore,
     \[\frac{W^A_{s,t}(n)}{\binom{n}{s}\binom{n}{s+t}}=\frac{t(n-s)(n-s-t)}{n^2(n-1)}.\]
     
 

    To compute,  $ \tau^B_{s,t} $, we start by creating links from the second particle in the first row and proceed rightwards along the ring. Finally, we link the $\bullet $ at the first site with one of the $t + 1$  available particles in the second row, excluding the first one. Thus, the weight of this link is $ \frac{q^{h-1}}{(t + 1)q} $ for some  $h \in \{2, \ldots, t + 1\}$. Removing the first column from any linked multiline queue in  $U^B_{s,t}(n)$  results in a linked multiline queue of type $(s - 1, t + 1, n - 1)$ that projects to a word beginning with 2 and has no $(\bb)$ column. 

       Recall that the sum of weights of these $L$MLQs is equal to $\eta_{s-1,t+1}(n-1)$. Therefore,
    \begin{eqnarray*}
    \tau^B_{s,t}(n)&=& \eta_{s-1,t+1}(n-1) \sum_{h=2}^{t+1} \frac{q^{h-1}}{[t+1]_q} \\
    &=&\frac{t+1}{s+t}\binom{n-2}{s-1} \binom{n-s-1}{s+t-1} \Big(1-\frac{1}{[t+1]_q}\Big). 
    \end{eqnarray*}

        
    Substituting this in $\eqref{betarec}$,
     \begin{eqnarray*}
    W^B_{s,t}(n) & = & \sum_{i=0}^{s-1} \binom{n-2}{i} \tau^B_{s-i,t}(n-i)\\
    & = &\frac{(t+1)(n-2)!(n-1)!}{(s-1)!(n-s)!(s+t)!(n-s-t-1)!}\Big(1-\frac{1}{[t+1]_q}\Big).
    \end{eqnarray*}
    Therefore,
     \[\frac{W^B_{s,t}(n)}{\binom{n}{s}\binom{n}{s+t}}=\frac{s(t+1)(n-s-t)}{n^2(n-1)}\Big(1-\frac{1}{[t+1]_q}\Big).\]

    We have \[T^>_{s,t}(n)=\frac{W^A_{s,t}(n)+W^B_{s,t}(n)}{\binom{n}{s}\binom{n}{s+t}}.\] If $t=0$ or $s+t=n$, the formula is trivially satisfied in either case because then, the projected word cannot start with $(3,2)$. Consider the case when $s=0$. There are no type ($B$) multiline queues. For a  type $(A)$ multiline queue, the first row entirely consists of $\circ$, and the second row begins with $\circ \, \bullet \, . \, \ldots \, .$, with $t-1$ additional $\bullet$'s. Thus, there are $\binom{n-2}{t-1}$  such multiline queues, each contributing a weight $1$. So, $T^>_{0,t}(n)=\frac{\binom{n-2}{t-1}}{\binom{n}{t}} = \frac{t(n-t)}{n(n-1)}$. Hence, we can write 
    \begin{align}
        T^>_{s,t}(n)&=\frac{t(n-s)(n-s-t)}{n^2(n-1)} + \frac{s(t+1)(n-s-t)}{n^2(n-1)} \Big(1-\frac{1}{[t+1]_q}\Big) \nonumber\\
        &= \frac{t(n-s)(n-s-t)}{n^2(n-1)}+ \frac{s(t+1)(n-s-t)([t+1]_q-1)}{n^2(n-1)[t+1]_q}, \label{Tst1}
    \end{align}
    for all $0 \leq s,t \leq n, s+t \leq n$.
    \begin{rem}\label{qindependence}
    Interestingly, the weight contributed by the type ($A$) multiline queues does not depend on the value of $q$. Further, the formula in \eqref{Tst1} is consistent with $T^>_{s,t}(n)$ in \cite{ayyerlinusson} for the case $q=0$, i.e., the multispecies TASEP on a ring.   
    \end{rem} 
    The proof is completed by substituting ~\eqref{Tst1} in~\eqref{PIE5}. 
    We get 
    \begin{multline}\label{cij3}
    c_{i,j}^q(n) = \frac{2(i-j)}{n^2(n-1)} +\frac{(i-j+1)(2j(n-i)+i+j-n-1)([i-j+1]_q-1)}{n^2(n-1)[i-j+1]_q}\\
    -\frac{(i-j+2)(j-1)(n-i)([i-j+2]_q-1)}{n^2(n-1)[i-j+2]_q}  -  \frac{j(i-j)(n-i+1)([i-j]_q-1)}{n^2(n-1)[i-j]_q},
\end{multline} when $i>j$.
This proves Theorem~\ref{th:PASEPcor} for the case $1 \leq j < i \leq n.$\\

 The case $i<j$ is now resolved similarly. Recall that $T^<_{s,t}(n)$ denotes the probability $\mathbb{P}\{\omega_1=2, \,\omega_2=3\}$, where $\omega$ is the word projected by a random multiline queue in $\Omega_{s,t}(n)$. The multiline queues that project to a word beginning with ($2,3$) are of one of the following types.
\[\text{(C) } \begin{array}{cccccccccc}
        \circ & \circ & . & . & \ldots & . \\
        \bullet & \circ & . & . & \ldots & . \\
        \hline
        2 & 3 & . & . & \ldots & .
    \end{array} \hspace{0.3in} \text{ or } \hspace{0.3in} \text{ (D) }
    \begin{array}{cccccccccc}
        \circ & \bullet & . & . & \ldots & . \\
        \bullet & \circ & . & . & \ldots & . \\
        \hline
        2 & 3 & . & . & \ldots & .
    \end{array}\]
We compute the weights of the two cases separately as follows:

\begin{enumerate}[(A)]\addtocounter{enumi}{+2}
     \item 
    Let $W^C_{s,t}(n)$ denote the sum of the weights of the $L$MLQs of MLQs of type ($C$) in $\Omega_{s,t}(n)$ for which the corresponding word begins with $(2,3)$. 
    Note that interchanging the first two columns of an $L$MLQ of type $(C)$ does not change its weight. Hence, $W^C_{s,t}(n)=W^A_{s,t}(n)$.
    \item 
    Let $W^D_{s,t}(n)$ denote the total weight of the linked multiline queues of type $(D)$. Let $U^D_{s,t}(n)$ be the set of the $L$MLQs that do not contain any ($\bb$) column. 
    Finally, let $\tau^D_{s,t}(n)$ represent the weight contributed to $W^D_{s,t}$ by the linked multiline queues in $U^D_{s,t}(n)$. There is a similar relation between $W^D_{s,t}(n)$ and $\tau^D_{s,t}(n)$ as there is for $W^B_{s,t}(n)$ and $\tau^B_{s,t}(n)$, and this can be proved using the same arguments as in Lemma~\ref{lem:alphatype1}. We have
 \end{enumerate}
    \begin{equation}\label{betadrec}
    W^D_{s,t}=\sum_{i=0}^{s-1}\binom{n-2}{i} \tau^D_{s-i,t}(n-i).
    \end{equation}
     To determine $\tau^D_{s,t}$, we start by creating links from the second $\bullet$ in the first row and move rightwards along the ring. We link leftmost $\bullet$ in the first row last, and this link has a weight of  $q^{h-1}/[t+1]_q$ for some $h \in \{1,\ldots, t\}$. This is because, for $h=t+1$, we get $\omega_1=1$. Removing the second column from any $L$MLQ in $U^D_{s,t}(n)$ results in an $L$MLQ of type $(s-1,t+1,n-1)$ that begins with $(\cb)$, has no $(\bb)$ column, and projects to a word that starts with $2$. The sum of weights of these linked multiline queues is again equal to $\eta_{s-1,t+1}(n-1)$.
     Therefore,
    \begin{eqnarray*}
    \tau^D_{s,t}(n)  & = &\eta_{s-1,t+1}(n-1) \sum_{h=1}^{t} \frac{q^{h-1}}{[t+1]_q}\\
    &=&\frac{t+1}{s+t}\binom{n-2}{s-1} \binom{n-s-1}{s+t-1} \Big(1-\frac{q^t}{[t+1]_q}\Big).
    \end{eqnarray*}
    Substituting this in \eqref{betadrec}, we get
     \begin{eqnarray*}
    W^D_{s,t}(n) & = & \sum_{i=0}^{s-1} \binom{n-2}{i} \tau^D_{s-i,t}(n-i)\\
    & = &\frac{(t+1)(n-2)!(n-1)!}{(s-1)!(n-s)!(s+t)!(n-s-t-1)!}\Big(1-\frac{q^t}{[t+1]_q}\Big).
    \end{eqnarray*}
Therefore, 
\[\frac{W^D_{s,t}(n)}{\binom{n}{s}\binom{n}{s+t}}=\frac{s(t+1)(n-s-t)}{n^2(n-1)}\Big(1-\frac{q^t}{[t+1]_q}\Big).\]
Once again we have, \[T^<_{s,t}=\frac{W^C_{s,t}+W^D_{s,t}}{\binom{n}{s}\binom{n}{s+t}},\] that is, for $t>0$ and $n>s+t$, 
\begin{equation}\label{Tst2}
T^<_{s,t}(n)=\frac{(s+tn)(n-s-t)}{n^2(n-1)} - \frac{s(t+1)(n-s-t)q^t}{n^2(n-1)[t+1]_q},
\end{equation}
and substituting this in \eqref{PIE6}, we get
\begin{multline}\label{cij4}
    c_{i,j}^q(n) = \frac{1}{n^2} -\frac{(j-i+1)(2i(n-j)+i+j-n-1)q^{(j-i)}}{n^2(n-1)[j-i+1]_q}\\
     +\frac{i(j-i)(n-j+1)q^{(j-i-1)}}{n^2(n-1)[j-i]_q}+\frac{(i-1)(j-i+2)(n-j)q^{(j-i+1)}}{n^2(n-1)[j-i+2]_q},
\end{multline} 
when $i<j-1$.
For $i=j-1$, we add terms corresponding to $T^<_{i,j-i-1}$ from \eqref{Tst2} to \eqref{cij4}, which are 
\[\frac{i(n-i)}{n^2(n-1)}-\frac{i(j-i)(n-i)q^{(j-i-1)}}{n^2(n-1)[j-i]^q}.\]  This proves Theorem~\ref{th:PASEPcor} for the case $1 \leq i < j \leq n$.

\section*{Acknowledgements}
 
I am thankful to my advisor Professor Arvind Ayyer for suggesting the problem and all the insightful discussions during the preparation of this paper and to Professor Leonid Petrov for suggesting a correction in the main theorem of the paper. I am also grateful to a SERB grant CRG/2021/001592 for partial support.

 \bibliography{mPASEP.bib}
\bibliographystyle{plain.bst}

\end{document}